\newcommand{\diag}{\mathop{\rm diag}}
\newcommand{\norm}[1]{\left\lVert#1\right\rVert}
\newcommand{\mnorm}[1]{{\left\vert\kern-0.25ex\left\vert\kern-0.25ex\left\vert #1 
    \right\vert\kern-0.25ex\right\vert\kern-0.25ex\right\vert}}
\newtheorem{proposition}{Proposition}
\newtheorem{assumption}{Assumption}
\newcommand{\ie}{{\it i.e.}}
\title{\LARGE \bf Alpha-Fair Routing in Urban Air Mobility with Risk-Aware Constraints
}
\author{Yue Yu, Zhenyu Gao, Sarah H.Q. Li, Qinshuang Wei, John-Paul Clarke, and Ufuk Topcu
\thanks{Y. Yu, Q. Wei, and U. Topcu are with the Oden Institute for Computational Engineering and Sciences, The University of Texas at Austin, Austin, TX, 78712, USA (emails: yueyu@utexas.edu,\, qinshuang.wei@utexas.edu, utopcu@utexas.edu). Z. Gao and J-P. Clarke are with the Department of Aerospace Engineering and Engineering Mechanics, The University of Texas at Austin, Austin, TX, 78712, USA (email: zhenyu.gao@utexas.edu,\, johnpaul@utexas.edu). Sarah H.Q. Li is with the Autonomous Control Laboratory, ETH Z\"{u}rich, Z\"{u}rich, Switzerland (email: sarahli@control.ee.ethz.ch).}%
}
\begin{document}

\maketitle
\thispagestyle{empty}
\pagestyle{empty}

\begin{abstract}
In the vision of urban air mobility, air transport systems serve the demands of urban communities by routing flight traffic in networks formed by vertiports and flight corridors. We develop a routing algorithm to ensure that the air traffic flow fairly serves the demand of multiple communities subject to stochastic network capacity constraints. This algorithm guarantees that the flight traffic volume allocated to different communities satisfies the \emph{alpha-fairness conditions}, a commonly used family of fairness conditions in resource allocation. It further ensures robust satisfaction of stochastic network capacity constraints by bounding the coherent risk measures of capacity violation. We prove that implementing the proposed algorithm is equivalent to solving a convex optimization problem. We demonstrate the proposed algorithm using a case study based on the city of Austin. Compared with one that maximizes the total served demands, the proposed algorithm promotes even distributions of served demands for different communities. 
\end{abstract}

\section{Introduction}

Urban air mobility (UAM) aims to provide sustainable and accessible air transport services at low altitudes in metropolitan areas \cite{cohen2021urban2,biehle2022social}. Recent advancements in electrification, automation, and digitalization have enabled many technological innovations for aviation, such as those for electric vertical take-off and landing aircraft, unmanned aerial systems, and automated air traffic management. Together these innovations will facilitate a wide range of urban aviation tasks, such as passenger mobility, goods delivery, and emergency services.

One challenge in UAM is to allocate limited and uncertain amounts of network resources \emph{fairly} to different flights. On the one hand, the capacity of the transportation network in UAM, defined by its vertiports and flight corridors, is both limited---due to construction budget \cite{yu2023vertiport}, noise pollution regulations \cite{gao2023noise}, and safety concerns for navigation \cite{wei2023risk}---and uncertain---due to stochastic weather conditions, dynamic obstacles \cite{wu2022risk}, and emergencies. The air traffic routed through the network must robustly satisfy these uncertain capacity constraints. On the other hand, the network must support flights between multiple origins and destinations to serve the demands of different user groups.

The existing methods for air traffic management with fairness considerations mainly focus on allocating delays fairly among different flights. These methods allocate delays to different flights by minimizing certain deviations from a target allocation \cite{pelegrin2023urban,vossen2003general,ball2010ground}. This target allocation typically satisfies certain desired principles, such as the ration-by-distance principle, which prioritizes flights according to their flight distances \cite{vossen2003general,ball2010ground}, or the ration-by-schedule principle, which preserves the first-scheduled-first-served ordering of flights \cite{barnhart2012equitable}. Different planning methods minimize different deviation functions, such as the amount of overtaking and number of reversals in the ordering of flights \cite{bertsimas2016fairness,chin2021efficiency}, the amount by which the total delay exceeds the maximum expected delay along a route \cite{barnhart2012equitable,chin2021efficiency},  the proportion of the total delays allocated to an airline divided by its peak requests \cite{fairbrother2020slot,zografos2019bi}, and the lexicographical worst-case deviation among different flights \cite{jacquillat2018interairline}. Recent results in UAM also consider fairness under uncertainties \cite{sun2023fair}.   

However, these existing methods focus on fairness among the service providers, such as different airlines, rather than fairness among service receivers, such as users from different communities. In particular, they seek fair distribution of flight delays based on the flight distances \cite{vossen2003general,ball2010ground} or the schedules requested by different airlines \cite{barnhart2012equitable}, not the origin and destination of the flights. In the context of UAM, a paramount question regarding how to ensure fairness in serving the demands of multiple user communities---which is among the largest barriers to UAM's community acceptance~\cite{cohen2021urban2,biehle2022social}---is, to our best knowledge, still an open question. 

We develop a routing algorithm for UAM to fairly serve the demands of different communities subject to stochastic network capacity. This algorithm has two key features. First, it ensures that the air traffic volume serving the demands of different communities satisfies the \emph{alpha fairness conditions}, a general class of fairness conditions in resource allocation that includes many different fairness notions, such as proportional fairness, total delay fairness, and max-min fairness \cite{shakkottai2008network,xinying2023guide}. Second, it ensures robust satisfaction of stochastic capacities constraints of vertiports and flight corridors by bounding the coherent risk measures of capacity violation. We prove that implementing this algorithm is equivalent to solving a convex optimization problem with conic constraints. We demonstrate this algorithm using a case study based on the city of Austin. Compared with one that maximizes the total demands served, the proposed algorithm consistently promotes even distributions of demands served for different communities.

\section{Alpha-fair routing in urban air mobility}
We introduce the alpha-fair routing problem in urban air mobility (UAM). The idea is to maximize the total utility function of the services in the form of payloads allocated to different communities subject to network flow constraints.

Throughout, we use the following notations. We let \(\mathbb{R}\), \(\mathbb{R}_{\geq 0}\), and \(\mathbb{N}\) denote the set of real numbers, nonnegative real numbers, and nonnegative integers, respectively. We let \(0_n\) and \(1_n\) denote the \(n\)-dimensional zero vector and all 1's vector, respectively. Given \(n_1, n_2\in\mathbb{N}\), we let \([n_1, n_2]\) denote the set of integers between \(n_1\) and \(n_2\). We let \(\Delta_n\subset\mathbb{R}^n\) denote the \(n\)-dimensional probability simplex.

\subsection{Nodes, links, and routes}
We model the transportation network using a directed graph \(\mathcal{G}\) composed of a set of nodes \(\mathcal{N}=\{1, 2, \ldots, n_n\}\), each of which corresponds to either a vertiport or a waypoint that marks the intersection of multiple flight corridors, and a set of links \(\mathcal{L}=\{1, 2, \ldots, n_l\}\), each of which corresponds to a flight corridor. Each link is an ordered pair of distinct nodes, where the first and second nodes are the ``tail" and ``head" of the link, respectively. A \emph{route} is a sequence of links connected in a head-to-tail fashion. We let \(\mathcal{R}=\{1, 2, \ldots, n_r\}\) denote a set of routes, which does not necessarily contain all possible routes in \(\mathcal{G}\). We will also use the notion of \emph{community}. Each route serves the demands of one or several of the total of \(n_c\in\mathbb{N}\) communities. 

\subsection{Incidence matrices}
We encode the topology of a graph \(\mathcal{G}\) using the notion of \emph{incidence matrices}, which, depending on its definition, describes which nodes each link connects, which links each route contains, and which communities' demands each route serves. In particular, we describe the incidence relationship between nodes and links using the node-link incidence matrix \(E\in\mathbb{R}^{n_n\times n_l}\). The entry \(E_{ij}\) in matrix \(E\) is associated with node \(i\) and link \(j\) as follows:
\begin{equation}\label{eqn: nl incidence}
     E_{ij}=\begin{cases}
    1, & \text{if node \(i\) is the head of link \(j\),}\\
    -1, & \text{if node \(i\) is the tail of link \(j\),}\\
    0, & \text{otherwise.}
    \end{cases}
\end{equation}
Similarly, we describe the incidence relationship between links and routes using the link-route incidence matrix \(F\in\mathbb{R}^{n_l\times n_r}\). The entry \(F_{ij}\) in matrix \(F\) is associated with link \(i\) and route \(j\) as follows:
\begin{equation}\label{eqn: lr incidence}
     F_{ij}=\begin{cases}
    1, & \text{if route \(j\) contains link \(i\),}\\
    0, & \text{otherwise.}
    \end{cases}
\end{equation}
Finally, we describe the incidence relationship between communities and routes using the community-route incidence matrix 
 \(H\in\mathbb{R}^{n_c\times n_r}\). The entry \(H_{ij}\) in matrix \(H\) is associated with community \(i\) and route \(j\) as follows:
\begin{equation}
     H_{ij}=\begin{cases}
    1, & \text{if route \(j\) serves the demand of community \(i\),}\\
    0, & \text{otherwise.}
    \end{cases}
\end{equation}

\subsection{Balanced flow constraints}
To ensure that the number of vehicles at each node is balanced, the total number of incoming vehicles must match the number of outgoing vehicles at each node. We denote the number of aircraft vehicles on different links per unit time using the \emph{vehicle flow vector} \(y\in\mathbb{R}^{n_l}\). In particular, the entry \([y]_{k}\) in vector \(y\) denotes the number of vehicles that use link \(k\). We formulate this constraint as follows: 
\begin{equation}\label{eqn: vehicle flow conserv}
    Ey=0_{n_n}, \, y\geq 0_{n_l}.
\end{equation}
If the constraint in \eqref{eqn: vehicle flow conserv} is violated, the vehicles will start to accumulate at some nodes, which cause unwanted crowding. 

\subsection{Vehicle capacity constraints}
Whenever multiple routes share an overlapping link, the amount of payload transported on each route is jointly constrained by the number of vehicles scheduled on the overlapping link. To model this constraint, we first define the \emph{payload flow vector} \(z\in\mathbb{R}^{n_r}\) such that \(z_i\) denotes the number of payloads transported along route \(i\). Next, based on the link-route incidence matrix in \eqref{eqn: lr incidence}, the  vehicle flow vector and the payload flow vector jointly satisfy the following constraint:
\begin{equation}\label{eqn: vehicle capacity}
    Fz\leq y, \, z\geq 0_{n_r}.
\end{equation}

\subsection{Node and link capacity constraints}
To ensure collision-free operation, the number of vehicles on each link and node must be bounded. To model these constraints, we let \(
    c\in\mathbb{R}_{\geq 0}^{n_n}\) and \(d\in\mathbb{R}_{\geq 0}^{n_l}\)
denote the nominal node and link flow volume, respectively. In particular, the values of \(c_i\) and \(d_j\) denote the maximum number of vehicles that enter node \(i\) and link \(j\), respectively. Furthermore, we define a weighting matrix \(K\in\mathbb{R}^{n_n\times n_l}\) such that \(K=\max\{E, 0\}\)
\ie, each element in \(K\) is the elementawise maximum of the corresponding element in matrix \(E\) and zero.
Based on these matrices, we define the following \emph{node and link capacity constraints}:
\begin{equation}\label{eqn: nl cap constraint}
    Ky\leq (1+\epsilon) c,\, y\leq (1+\epsilon) d,  
\end{equation}
where \(\epsilon\in\mathbb{R}_{\geq 0}\) is a tolerance parameter. The constraints in \eqref{eqn: nl cap constraint} state that the number of vehicles on each node or link does not exceed the corresponding nominal value by more than \(100\epsilon\) percent. 

\subsection{Alpha-fair routing problem}
Given the network constraints, we aim to route the air traffic to serve the demands of multiple communities. Throughout we assume each community has sufficiently high demands and we aim to maximize the demands we serve for each community without violating network capacity constraints. To this end, we introduce the \emph{alpha-fair routing} approach, which determines the amount of payload allocated to each community via optimization. Let \(x\in\mathbb{R}^{n_c}_{\geq 0}\) denote the \emph{community demand vector} such that \(x_k\) denotes the payload demand we allocate to community \(k\). The idea of alpha-fair routing is to find an optimal allocation \(x^\star\in\mathbb{R}^{n_c}\) by solving the following optimization problem:
\begin{equation}\label{opt: alp routing}
    \begin{array}{ll}
    \underset{x, y, z}{\mbox{maximize}} & \sum_{k=1}^{n_c} \psi_k(x_k) \\
       \mbox{subject to}  & Ey=0_{n_n}, \, x = Hz,\, Fz\leq y,\\
       & Ky\leq (1+\epsilon) c,\, y\leq (1+\epsilon)d,\\
       &  y\geq 0_{n_l},\, z\geq 0_{n_r}\\
    \end{array}
\end{equation}
where \(\alpha\in [0, \infty)\) is a fairness parameter, and \(\psi_\alpha\) is the \emph{\(\alpha\)-utility function} defined as follows
\begin{equation}
    \psi_k(x_k)=\begin{cases}
        \log(x_k), & \text{if \(\alpha=1\)},\\
        \frac{x_k^{1-\alpha}}{1-\alpha}, & \text{if \(\alpha\geq 0, \alpha\neq 1\)}.
    \end{cases}
\end{equation}


The goal of optimization~\eqref{opt: alp routing} is to find an optimal payload allocation that satisfies a set of \emph{fairness condition}. In particular, let \(x^\star\in\mathbb{R}^{n_c}\) be  optimal for optimization~\eqref{opt: alp routing}. Then one can prove that, for any \(x\in\mathbb{R}^{n_c}\) such that \((x, y, z)\) satisfies the constraints in \eqref{opt: alp routing} for some \(y\in\mathbb{R}^{n_l}\) and \(z\in\mathbb{R}^{n_r}\), we must have  
\begin{equation}\label{eqn: alp fair cond}
    \sum_{k=1}^{n_c}\frac{x_k-x_k^\star}{(x^\star_k)^\alpha}\leq 0.
\end{equation}
The conditions in \eqref{eqn: alp fair cond} are known as the \emph{\(\alpha\)-fairness conditions} \cite{shakkottai2008network}. They include many popular notions of fairness in the network resource allocation literature, including proportional fairness, total delay fairness, and max-min fairness. We refer the interested readers to \cite[Sec. 2.4]{shakkottai2008network} and the references therein for further details on alpha-fairness.

\section{Risk-aware alpha-fair routing}
One drawback of the routing problem in \eqref{opt: alp routing} is that it ignores the effects of uncertainty in air mobility. In the following, we introduce a variation of the routing problem in \eqref{opt: alp routing} that accounts for uncertain network capacity, which occurs due to, for instance, weather conditions and emergency shutdowns. To this end, we denote the capacity parameter as \(\omega\coloneqq \begin{bmatrix}
c^\top & d^\top 
\end{bmatrix}^\top\).
Furthermore, we make the following assumption on \(\omega\).   

\begin{assumption}\label{asp: rand cap}
The parameter vector \(\omega\coloneqq \begin{bmatrix}
c^\top & d^\top 
\end{bmatrix}^\top\) is a discrete random variable with \(N\in\mathbb{N}\) possible outcomes. Its sample space is given by
\(\{\omega^i\coloneqq \begin{bmatrix}
(c^i)^\top & (d^i)^\top 
\end{bmatrix}^\top\}_{i=1}^N\), where \(c^i\in\mathbb{R}_{\geq 0}^{n_n}\) and \(d^i\in\mathbb{R}_{\geq 0}^{n_l}\) for all \(i\in[1, N]\).
\end{assumption}
In the following, we will discuss how to modify the capacity constraints in \eqref{eqn: nl cap constraint} and the corresponding routing problem in \eqref{opt: alp routing} under Assumption~\eqref{asp: rand cap}

\subsection{Alpha-fair routing with risk-aware constraints}

If \((c, d)=(c^i, d^i)\) for some \(i\in[1, d]\), then the capacity constraints in \eqref{eqn: nl cap constraint} is equivalent to a constraint on the optimal value of a linear program. In this case, one can show that \eqref{eqn: nl cap constraint} is equivalent to 
\begin{equation}\label{eqn: h star i constraint}
    h_i^\star\leq \epsilon,
\end{equation}
where \(h_i^\star\in\mathbb{R}\) is the optimal value of the following linear program
\begin{equation}\label{opt: h star i}
    \begin{array}{ll} 
    \underset{h_i}{\mbox{minimize}} & h_i \\
    \mbox{subject to}     & Ky\leq (1+h_i)c^i,\, y\leq (1+ h_i) d^i, \, h_i\geq 0.
    \end{array}
\end{equation}
To see the equivalence between \eqref{eqn: nl cap constraint} and \eqref{eqn: h star i constraint}, notice that \eqref{eqn: nl cap constraint} implies that \(h_i=\epsilon\) satisfies the constraints in \eqref{opt: h star i}, hence \eqref{eqn: h star i constraint} must hold. On the other hand, if \eqref{eqn: h star i constraint} holds, then letting \(h_i=h_i^\star\) in the constraints in \eqref{opt: h star i} implies that \eqref{eqn: nl cap constraint}.

To constrain the capacity violation under Assumption~\ref{asp: rand cap}, we define a scalar-valued random variable \(\eta\) with a finite sample space \(\{h^\star_1, h^\star_2, \ldots, h_N^\star\}\), where \(h^\star_i\) is the optimal value of the deterministic linear program in \eqref{opt: h star i}. In addition, we introduce a family of functions, denoted by \(\rho_\delta:\mathbb{R}^{n_l}\to\mathbb{R}\) for some parameter \(\delta\in(0, 1)\), such that   
 \begin{equation}\label{eqn: crm}
\rho_\delta(y) =\max_{q\in\mathbb{Q(\delta)}}\, q^\top h^\star,
\end{equation} 
where set \(\mathbb{Q}(\delta)\subset\Delta_N\) is a closed and convex set that satisfies the following assumption.
\begin{assumption}\label{asp: risk envelope}
    There exists \(p\in\Delta_N\) and a closed, convex, and proper function \(g_{\delta}:\mathbb{R}^N\to\mathbb{R}\cup \{\infty\}\) such that \(g_\delta(p)<0\) and \(\mathbb{Q}(\delta)\coloneqq \left\{q\in\mathbb{R}^N| g_{\delta}(q)\leq 0\right\} \cap \Delta_N\). Furthermore, \(g_{\delta_1}(q)>g_{\delta_2}(q)\) for all \(q\in\Delta_N\) and \(0<\delta_1<\delta_2<1\).
\end{assumption}

Function \(g_\delta\) provides a \emph{coherent risk measure} of random variable \(\eta\). In particular, the representation theorem states that a coherent risk measure of a random variable \(\eta\)---which is characterized by the monotonicity, translational invariance, positive homogeneity, subadditivity properties \cite{artzner1999coherent}---is equivalent to the maximum expectation of this variable over a convex and closed set of probability distributions \cite[Prop. 4.1]{artzner1999coherent}. Hence, under Assumption~\ref{asp: risk envelope}, \eqref{eqn: crm} implies that \(g_\delta(y)\) is is a coherent risk measure of \(\eta\), where \(\delta\in[0, 1]\) is an uncertainty parameter: the volume of set \(\mathbb{Q}(\delta)\) increases as \(\delta\) increases. Many common risk measures satisfy Assumption~\ref{asp: risk envelope}. See Appendix for some examples.

Based on function \eqref{eqn: crm}, we propose to replace the stochastic constraint in \eqref{eqn: nl cap constraint} with the following:
\begin{equation}\label{eqn: risk constraint}
    \rho_\delta(y)\leq \epsilon.
\end{equation}
Under Assumption~\ref{asp: rand cap}, if \(\mathbb{Q}(\delta)=\{p\}\), then \eqref{eqn: risk constraint} implies that the constraints in \eqref{eqn: nl cap constraint} only hold in expectation. On the other hand, if \(\mathbb{Q}(\delta)=\Delta_N\), then \eqref{eqn: h star i constraint} holds for all \(i\in[1, d]\). By ranging \(\delta\) within the interval \([0, 1]\), the constraint in \eqref{eqn: risk constraint} ensures different tradeoffs between a risk-neutral and a risk-averse variation of the stochastic constraints in \eqref{eqn: nl cap constraint}.  

By replacing the constraints in \eqref{eqn: nl cap constraint} with its risk-aware variation in \eqref{eqn: crm}, we  obtain a risk-aware variation of the routing problem in \eqref{opt: alp routing}, given as follows:
\begin{equation}\label{opt: crm routing}
    \begin{array}{ll}
    \underset{x, y, z}{\mbox{maximize}} & \sum_{k=1}^{n_c} \psi_k(x_k) \\
       \mbox{subject to}  & Ey=0_{n_n}, \, x = Hz,\, Fz\leq y,\\
       & \rho_\delta(y)\leq \epsilon,\, y\geq 0_{n_l},\, z\geq 0_{n_r}.    \end{array}
\end{equation}
 
\subsection{Simplification of risk-aware constraints}

Optimization~\eqref{opt: crm routing} seems challenging to solve: it contains an outer layer optimization over the network flow variables and an inner layer optimization that comes from the definition of the coherent risk measure in \eqref{eqn: crm}. In the following, we show that this nested two-layer optimization is equivalent to a single-level convex optimization using duality theory. To this end, we make the following assumption on set \(\mathbb{Q}\) in \eqref{eqn: crm}.

Assumption~\ref{asp: risk envelope} states that set \(\mathbb{Q}\) is the intersection of the sublevel set of a convex function and the probability simplex. Many popular risk measures---such as the conditional value-at-risk, entropic value-at-risk, and total variation distance---satisfy this assumption. See the Appendix for the detailed formula of some popular risk measures. 
 
Based on Assumption~\ref{asp: risk envelope}, the value of the coherent risk measure in \eqref{eqn: crm} is the optimal value of not only the maximization problem in \eqref{eqn: crm}, but also its dual problem, as shown by the following proposition.

\begin{proposition}\label{prop: crm conic}
Suppose that Assumption~\ref{asp: rand cap} and Assumption~\ref{asp: risk envelope} hold. Given \(\delta\in(0, 1)\) and \(y\in\mathbb{R}^{n_l}_{\geq 0}\), the value of \(\rho_{\delta}(y)\) equals the optimal value of the following optimization problem
\begin{equation}\label{opt: crm}
\begin{array}{ll}
    \underset{w, \lambda, \nu, h}{\mbox{minimize}}  & \lambda g_\delta^*(\frac{1}{\lambda}w)-\nu \\
    \mbox{subject to} & Ky\leq (1+h_i)c^i,\, y\leq (1+h_i) d^i,\, h\geq 0_N,\\
    & w-\nu1_N\geq h,\, \lambda\geq 0,\, i\in[1, N],
\end{array}
\end{equation}
where \(g_\delta^*:\mathbb{R}^N\to\mathbb{R}\cup\{\infty\}\) is the convex conjugate function of \(g_\delta\), \ie, \(g_\delta^*(w)\coloneqq \sup_q \, q^\top w-g_\delta(w)\)  for all \(w\in\mathbb{R}^N\). Furthermore, we define \(0\cdot g_\delta^*(\frac{1}{0}w)\coloneqq \infty\) for all \(w\in\mathbb{R}^N\).
\end{proposition}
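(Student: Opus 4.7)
The plan is to apply Lagrangian duality to the inner maximization defining $\rho_\delta(y)$. Writing out that definition explicitly, we have
\[
\rho_\delta(y) \;=\; \max_{q}\;\Bigl\{\, q^\top h^\star \;:\; g_\delta(q)\leq 0,\ 1_N^\top q = 1,\ q \geq 0_N\,\Bigr\},
\]
with $h^\star=(h_1^\star,\ldots,h_N^\star)^\top$ determined componentwise by the linear programs in~\eqref{opt: h star i}. The goal is to show that the Lagrangian dual of this concave maximization, after a change of variables, reduces to the minimization in~\eqref{opt: crm}.

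The first step is to form the Lagrangian $L(q,\lambda,\nu,\mu)=q^\top h^\star-\lambda g_\delta(q)+\nu(1-1_N^\top q)+\mu^\top q$ with multipliers $\lambda\geq 0$, $\nu\in\mathbb{R}$, and $\mu\geq 0_N$ attached to $g_\delta(q)\leq 0$, $1_N^\top q=1$, and $q\geq 0_N$, respectively. Collecting $q$-terms gives $L=\nu+q^\top(h^\star-\nu 1_N+\mu)-\lambda g_\delta(q)$, whose supremum over $q\in\mathbb{R}^N$ equals $\nu+(\lambda g_\delta)^*(h^\star-\nu 1_N+\mu)$; for $\lambda>0$ the identity $(\lambda g_\delta)^*(u)=\lambda g_\delta^*(u/\lambda)$ turns this into $\nu+\lambda g_\delta^*\!\bigl((h^\star-\nu 1_N+\mu)/\lambda\bigr)$. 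Setting $w=h^\star-\nu 1_N+\mu$ and flipping the sign of the scalar $\nu$, the constraint $\mu\geq 0_N$ becomes $w-\nu 1_N\geq h^\star$ and the dual objective becomes $\lambda g_\delta^*(w/\lambda)-\nu$. The boundary value $\lambda=0$ is ruled out by the stated convention $0\cdot g_\delta^*\!\bigl(\tfrac{1}{0}w\bigr)\coloneqq\infty$.

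Next I would invoke strong duality. Since $g_\delta$ is closed, convex, and proper, and Assumption~\ref{asp: risk envelope} supplies $p\in\Delta_N$ with $g_\delta(p)<0$, Slater's condition holds for the primal maximization; hence the dual optimum equals $\rho_\delta(y)$ and is attained at some $\lambda>0$.

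Finally, I would rewrite the coupling constraint $w-\nu 1_N\geq h^\star$ in the form used in~\eqref{opt: crm}, namely that there exist $h\geq 0_N$ satisfying $Ky\leq (1+h_i)c^i$ and $y\leq (1+h_i)d^i$ for every $i\in[1,N]$ together with $w-\nu 1_N\geq h$. This equivalence holds because the program~\eqref{opt: h star i} is separable across $i$ and its feasible set in $h_i$ is exactly $[h_i^\star,\infty)$, so a componentwise inequality $w-\nu 1_N\geq h$ is achievable by some LP-feasible $h$ if and only if $w-\nu 1_N\geq h^\star$. Substituting this surrogate description back into the dual problem yields the minimization in~\eqref{opt: crm} and completes the argument. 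The main subtlety will be keeping the sign bookkeeping consistent through the Lagrangian manipulations and handling the $\lambda=0$ boundary case via the stated $\infty$-convention; the strong duality step itself is routine once Slater's condition is verified from Assumption~\ref{asp: risk envelope}.
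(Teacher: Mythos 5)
Your proposal is correct and follows essentially the same route as the paper: Lagrangian duality between the risk-measure maximization over \(q\in\mathbb{Q}(\delta)\) and the conjugate-based minimization, strong duality via the Slater point \(q=p\) with \(g_\delta(p)<0\), and the final replacement of \(h^\star\) by any LP-feasible \(h\) using the fact that the feasible set of \eqref{opt: h star i} in \(h_i\) is the interval \([h_i^\star,\infty)\). The only cosmetic difference is direction—you dualize the primal maximization directly, while the paper starts from the candidate minimization and recovers the maximization as its dual—and both treatments handle the \(\lambda=0\) boundary by the same convention.
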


\begin{proof}
We first show that the dual problem of the optimization in \eqref{eqn: crm} is as follows:
\begin{equation}\label{opt: crm min}
\begin{array}{ll}
    \underset{\lambda, \nu, w}{\mbox{minmize}}  & \lambda g_\delta^*(\frac{1}{\lambda}w)-\nu\\
    \mbox{subject to} & w-\nu1_N\geq h^\star, \, \lambda\geq 0.
\end{array}
\end{equation}
Notice that we implicitly assume that \(\lambda>0\) in \eqref{opt: crm min}. The Lagrangian of optimiztaion~\eqref{opt: crm min} is given by \(L(w, \lambda, \nu, q, \sigma)= \textstyle \lambda g_\delta^*(\frac{1}{\lambda}w)+ q^\top (h^\star+\nu1_N-w)-\sigma\lambda-\nu.\).
The dual problem of optimization~\eqref{opt: crm min} is as follows:
\begin{equation}\label{opt: crm dual 1}
\begin{array}{ll}
    \underset{ q, \sigma}{\mbox{maxmize}}  & \underset{ w, \lambda, \nu}{\mbox{min}}\,L(w, \lambda, \nu, q, \sigma)\\
    \mbox{subject to} &  q\geq 0_N, \, \sigma\geq 0.
\end{array}
\end{equation}
For any \(\lambda>0\), we can show that 
\begin{equation}\label{eqn: w min}
\textstyle  \underset{ w }{\mbox{min}} \, \lambda g_\delta^*(\frac{1}{\lambda} w)-q^\top w=\textstyle-\underset{ u }{\mbox{max}} \, \lambda q^\top u-\lambda g_\delta^*(u)=-\lambda g_\delta(q),
\end{equation}
where the last step is due to the fact that \((g_\delta^*)^*=g_\delta\) when \(g_\delta\) under Assumption~\ref{asp: risk envelope} \cite[Thm.12.2]{rockafellar2015convex}.
By using \eqref{eqn: w min} we can show the following:
\begin{equation}\label{eqn: dual func}
    \begin{aligned}
        &\underset{ \lambda, \nu, w}{\mbox{min}}\,L(w, \lambda, \nu, q, \sigma)\\
        &= \underset{\lambda, \nu}{\mbox{min}}\, -\lambda g_\delta(q)+ q^\top (h^\star+\nu1_N)-\sigma\lambda-\nu\\
        &=\begin{cases}
            q^\top h^\star, & \text{if } g_\delta(q)=-\sigma, \, q^\top 1_N=1,\\
            -\infty, & \text{otherwise.}
        \end{cases}
    \end{aligned}
\end{equation}

Substituting \eqref{eqn: dual func} into \eqref{opt: crm dual 1} gives
\begin{equation}\label{opt: crm dual 2}
\begin{array}{ll}
    \underset{q}{\mbox{maxmize}}  & \textstyle q^\top h^\star\\
    \mbox{subject to} &  q^\top 1_N=1, \, q\geq 0,\, g(q)\leq 0.
\end{array}
\end{equation}
Notice that, under  Assumption~\ref{asp: risk envelope}, optimization~\eqref{opt: crm dual 2} is exactly the optimization in \eqref{eqn: crm}. Since Assumption~\ref{asp: rand cap} and Assumption~\ref{asp: risk envelope} together imply that optimization~\eqref{opt: crm dual 2} is strictly feasible if \(q=p\), we conclude that optimization~\eqref{opt: crm min} and optimization~\eqref{opt: crm dual 2} have the same optimal value.

Finally, suppose that \((w, \lambda, \nu)\) satisfies the constraints in optimization~\eqref{opt: crm min}. Since \(h^\star\) satisfies the constraints in \eqref{opt: h star i} for all \(i\in[1, N]\), \((w, \lambda, \nu, h^\star)\) satisfies the constraints in optimization~\eqref{opt: crm}. Next, suppose that \((w, \lambda, \nu, h)\) is optimal for optimization~\eqref{opt: crm}. Since \(h_i^\star\) is the optimal value of optimization~\eqref{opt: h star i} for all \(i\in[1, N]\), we necessarily have 
\begin{equation*}
    w-\nu1_N\geq h\geq h^\star.
\end{equation*}
Hence \((w, \lambda, \nu)\) satisfies the constraints of optimization~\eqref{opt: crm min}. Therefore there exists \(h\in\mathbb{R}^n\) such that \((w, \lambda, \nu, h)\) satisfies the constraints in optimization~\eqref{opt: crm min} if and only if \((w, \lambda, \nu)\) satisfies the constraints in optimization~\eqref{opt: crm}. Since optimization~\eqref{opt: crm} and optimization~\eqref{opt: crm min} have the same objective function, we conclude that the optimal values of the two optimization problems are the same. 
\end{proof}

Proposition~\ref{prop: crm conic} shows that any upper bound on the coherent risk measure in \eqref{eqn: crm} is an upper bound on the optimal value of a convex minimization problem. Based on this observation, the following proposition shows that one can merge said minimization together with the outer-layer maximization in \eqref{opt: crm routing} by introducing additional variables and constraints.

\begin{proposition}\label{prop: crm routing conic}
Suppose that Assumption~\ref{asp: rand cap} holds. 
Optimization~\eqref{opt: crm routing} is equivalent to the following optimization problem
\begin{equation}\label{opt: crm routing conic}
    \begin{array}{ll}
    \underset{x, y, z, w, \lambda, \nu, h}{\mbox{maximize}} & \sum_{k=1}^{n_c} \psi_k(x_k) \\
       \mbox{subject to}  & Ey=0_{n_n}, \, x = Hz,\, Fz\leq y,\\
       &Ky\leq (1+h_i)c^i,\, y\leq (1+h_i)d^i,\\
    &w-\nu1_N\geq h,\, y\geq 0_{n_l}, \, z\geq 0_{n_r},\, \,  h\geq 0_N,\\
    &\lambda g_\delta^*(\frac{1}{\lambda}w)-\nu\leq \epsilon,\,\lambda\geq 0,\, i\in[1, N].
    \end{array}
\end{equation}
\end{proposition}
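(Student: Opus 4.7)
The plan is to reduce Proposition~\ref{prop: crm routing conic} to a direct application of Proposition~\ref{prop: crm conic}: since the latter expresses $\rho_\delta(y)$ as the optimal value of the convex minimization in \eqref{opt: crm}, the scalar constraint $\rho_\delta(y)\leq \epsilon$ in \eqref{opt: crm routing} is equivalent to the existence of auxiliary variables $(w,\lambda,\nu,h)$ satisfying the constraints of \eqref{opt: crm} with objective value at most $\epsilon$. This is the standard epigraph trick that flattens the nested max–min structure, and the extra variables $(w,\lambda,\nu,h)$ are precisely those appearing in \eqref{opt: crm routing conic}.

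For the forward direction, I would take any $(x,y,z)$ feasible for \eqref{opt: crm routing}. Proposition~\ref{prop: crm conic} applied to $y$ yields $\rho_\delta(y)=\min\{\lambda g_\delta^*(\tfrac{1}{\lambda}w)-\nu : (w,\lambda,\nu,h) \text{ feasible in \eqref{opt: crm}}\}\leq \epsilon$. Picking a minimizer $(w,\lambda,\nu,h)$, the tuple $(x,y,z,w,\lambda,\nu,h)$ satisfies every constraint listed in \eqref{opt: crm routing conic}: the node-flow, payload, and vehicle-capacity constraints come directly from \eqref{opt: crm routing}; the constraints $Ky\leq (1+h_i)c^i$, $y\leq (1+h_i)d^i$, $h\geq 0_N$, $w-\nu 1_N\geq h$, and $\lambda\geq 0$ are exactly those of \eqref{opt: crm}; and the epigraph inequality $\lambda g_\delta^*(\tfrac{1}{\lambda}w)-\nu\leq \epsilon$ holds by the choice of minimizer. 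The objective values coincide because the additional variables do not enter $\sum_k \psi_k(x_k)$.

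For the reverse direction, I would take any $(x,y,z,w,\lambda,\nu,h)$ feasible for \eqref{opt: crm routing conic} and observe that its projection $(x,y,z)$ inherits the node-flow, payload, and vehicle-capacity constraints. The remaining variables $(w,\lambda,\nu,h)$ form a feasible point of the minimization in \eqref{opt: crm} at this $y$, so by Proposition~\ref{prop: crm conic},
\begin{equation*}
\rho_\delta(y)\;\leq\;\lambda g_\delta^*\bigl(\tfrac{1}{\lambda}w\bigr)-\nu\;\leq\;\epsilon,
\end{equation*}
establishing feasibility in \eqref{opt: crm routing}. Since the objectives match and the feasible sets project/lift into each other at equal objective values, the optimal values and optimal $(x,y,z)$ components agree.

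The subtle point, and the main obstacle, is the attainment of the minimum in \eqref{opt: crm} used in the forward direction: strictly speaking, $\rho_\delta(y)\leq \epsilon$ gives an infimum at most $\epsilon$, and passing to a feasible tuple achieving objective $\leq \epsilon$ requires either attainment or an approximation argument. Under Assumption~\ref{asp: risk envelope}, $g_\delta$ is closed convex and proper, so $g_\delta^*$ is closed convex and proper and the perspective map $(w,\lambda)\mapsto \lambda g_\delta^*(\tfrac{1}{\lambda}w)$ is closed convex on $\lambda>0$ with the convention $0\cdot g_\delta^*(\tfrac{1}{0}w)=\infty$ specified in the statement; combined with the strict feasibility ingredient used inside Proposition~\ref{prop: crm conic}, this is enough to ensure attainment. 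Modulo this technicality, the equivalence is an immediate consequence of Proposition~\ref{prop: crm conic}.
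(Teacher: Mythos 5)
Your proof is correct and follows essentially the same route as the paper's: both directions reduce to applying Proposition~\ref{prop: crm conic} to trade the constraint $\rho_\delta(y)\leq\epsilon$ for the existence of auxiliary variables $(w,\lambda,\nu,h)$ feasible in \eqref{opt: crm} with objective at most $\epsilon$. Your remark on attainment of the minimum in the forward direction is a technicality the paper passes over silently (it simply asserts existence of a tuple achieving $\lambda g_\delta^*(\frac{1}{\lambda}w)-\nu=\rho_\delta(y)$), and your closedness/strict-feasibility justification for it is a welcome addition.
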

\begin{proof}
    If \((x, y, z, w, \nu, \lambda, h)\) satisfies the constraints in \eqref{opt: crm routing conic}, then \((w, \lambda, \nu, h)\) satisfies the constraints in \eqref{opt: crm}. Furthermore, Proposition~\eqref{prop: crm conic} implies that
    \begin{equation}
        \textstyle \epsilon \geq \lambda g_\delta^*(\frac{1}{\lambda}w)-\nu\geq \rho_{\delta}[\zeta_{\mathbb{D}}(y)]. 
    \end{equation}
    Hence \((x, y, z)\) satisfies the constraints in \eqref{opt: crm routing}. On the other hand, if  \((x, y, z)\) satisfies the constraints in \eqref{opt: crm routing}, then Proposition~\ref{prop: crm conic} implies that there exists \(( w, \lambda, \nu, h)\) such that 
    \begin{equation}
         \textstyle \lambda g_\delta^*(\frac{1}{\lambda}w)-\nu= \rho_{\delta}[\zeta_{\mathbb{D}}(y)]\leq \epsilon. 
    \end{equation}
    In other words, \((x, y, z, w, \lambda, \nu, h)\) satisfies the constraints in \eqref{opt: crm routing conic}. Therefore, \((x, y, z)\) satisfies the constraints in \eqref{opt: crm routing} if and only if there exists \(w, \lambda, \nu, h\) such that \((x, y, z, w, \lambda, \nu, h)\) satisfies the constraints in \eqref{opt: crm routing conic}. Since optimization~\eqref{opt: crm routing} and \eqref{opt: crm routing conic} have the same objective function, we complete the proof.
\end{proof}

Proposition~\ref{prop: crm routing conic} shows that the risk-aware routing problem in \eqref{opt: crm routing} is equivalent to the constrained convex optimization problem in \eqref{opt: crm routing conic}. For come common risk measures, the constraints in \eqref{opt: crm routing conic} are either linear or exponential cone constraints. See Appendix for details.
\section{Numerical Experiments}

We demonstrate the proposed routing algorithm using a case study based on a UAM network in the city of Austin, illuatrated by Fig.~\ref{subfig: Austin network}. This network serves the demands of \(n_c=46\) communities, each corresponding to a zip code area in the city of Austin. It contains \(n_n=17\) nodes, each corresponding to a vertiport; \(n_l=72\) links, each corresponding to a flight corridor; and \(n_r=200\) candidate routes, each containing at most five connected flight corridors.  We choose the locations of the nodes to be city centers or subcenters that are geographically and societally reconfigurable into vertiports. To simplify air traffic management in UAM, we deploy the links such that there are no intersections between different links. We select the 200 routes based on the public transportation plans and the function of different communities. We choose the capacities of the vertiports and flight corridors based on utilization forecast and safety and noise considerations. We also consider a uniform capacity reduction of \(20\%\) and \(40\%\) for all vertiports and flight corridors, which occurs with probability \(0.3\) and \(0.2\), respectively. 

We aim to route the network flow of vehicles in the Austin network to fairly serve the demands of all 46 communities subject to constraints on the coherent risk measures of the stochastic link and node capacity violation. We illustrate the routing results obtained by solving optimization~\eqref{opt: crm routing conic} using the Frank-Wolfe method \cite{jaggi2013revisiting} (terminated after 100 iterations) in Fig.~\ref{fig: Austin maps} and Fig.~\ref{fig: different delta}. We compare them with the results of maximizing the total community demand served, which is obtained by solving optimization~\eqref{opt: crm routing conic} with \(\psi_k(x_k)=x_k\) for all \(k\in[1, n_c]\). These results show that optimizing the alpha-utility function promotes even distribution of the demands served for different communities.

\begin{figure*}[t]
	\centering
 \subfloat[Austin network\label{subfig: Austin network}]{
        \includegraphics[width=0.32\textwidth]{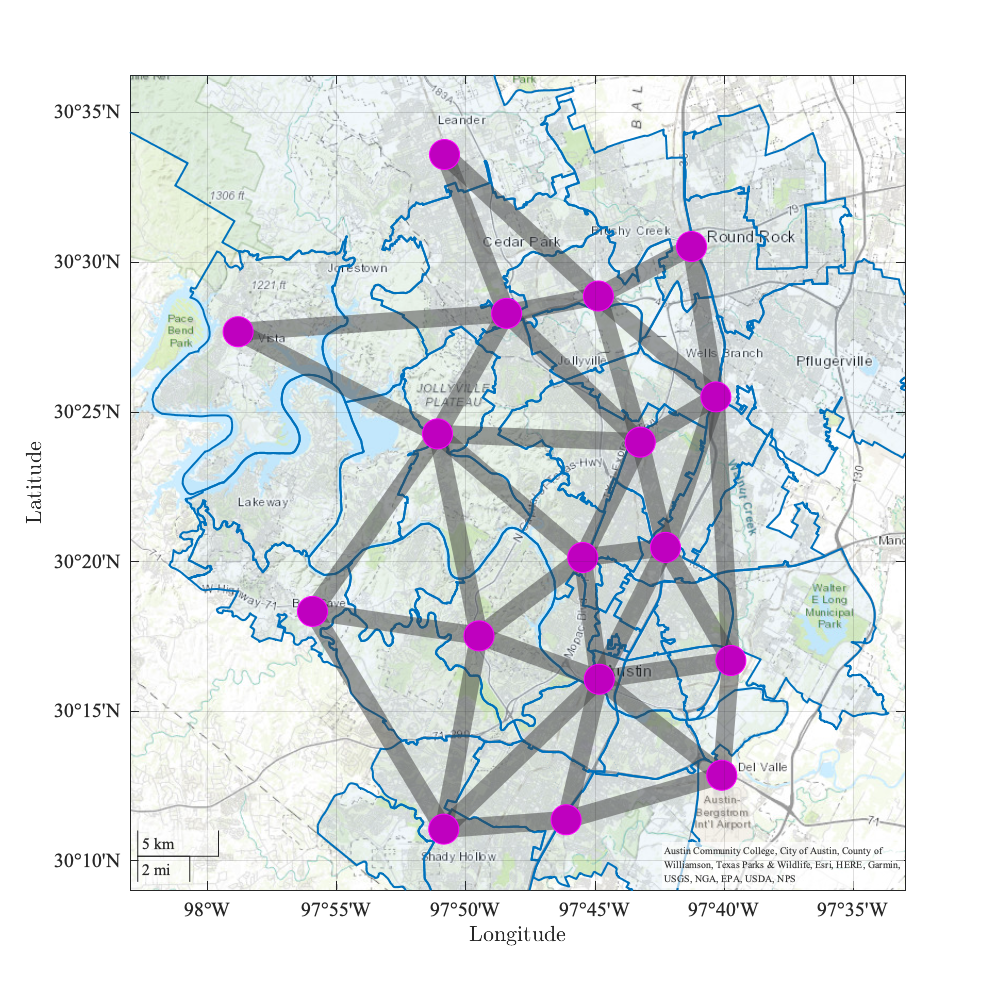}
        }
        \hfill
        \subfloat[Maximizing the total demand served \label{subfig: Austin maxsum}]{\includegraphics[width=0.32\textwidth]{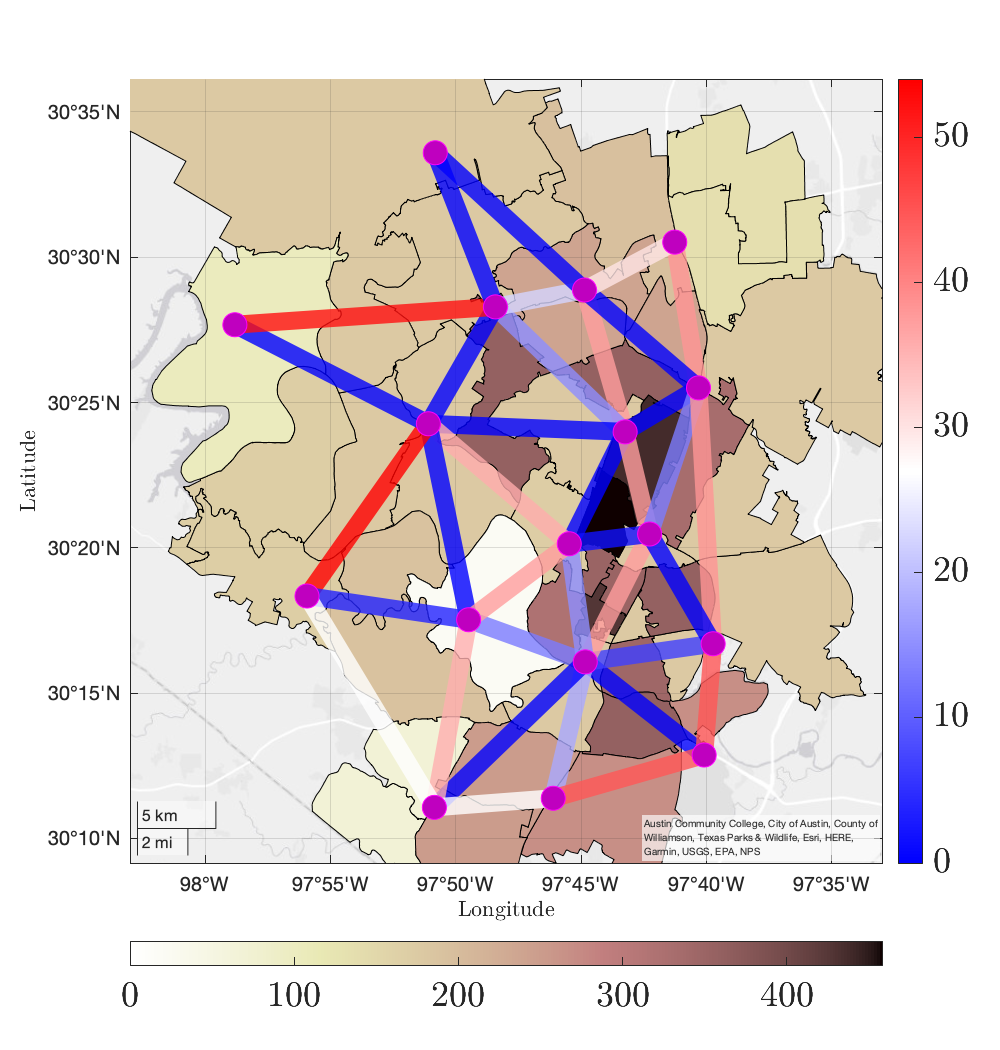}}
        \hfill
        \subfloat[Ensuring alpha-fairness\label{subfig: Austin alpha fair}]{\includegraphics[width=0.32\textwidth]{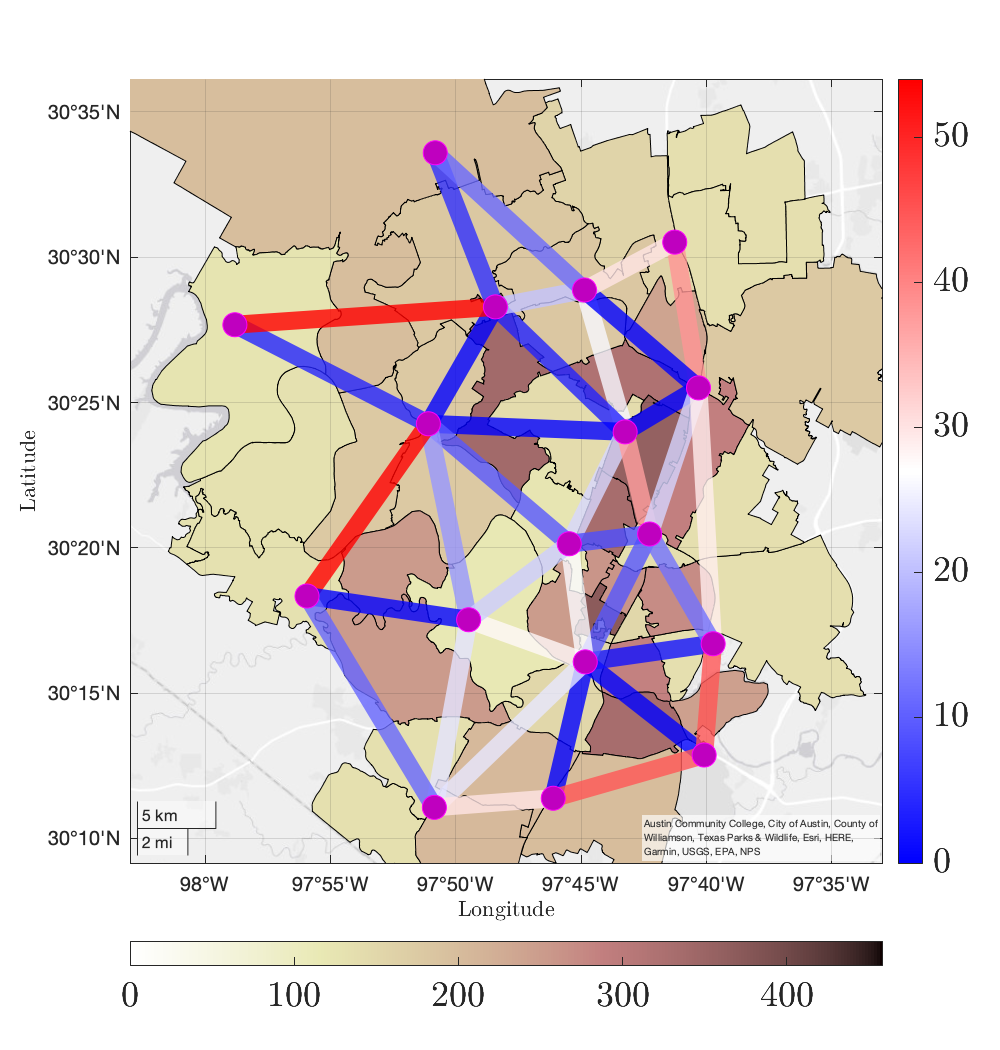}}
	\caption{The Austin network (left), and the average number of flights per hour for different links and communities when maximizing the total community demand served (middle) and ensuring alpha-fairness (right) subject to capacity constraints based on the conditional value-at-risk.}
	\label{fig: Austin maps}
\end{figure*}
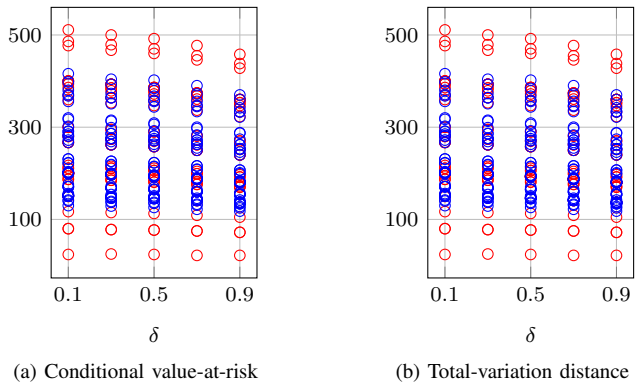
\begin{figure}[!ht]
\centering
     \subfloat[Conditional value-at-risk\label{subfig: cvar different del}]{%
       \begin{tikzpicture}
    \begin{axis}[xlabel={{\footnotesize \(\delta\)}},width=0.5\columnwidth,
            height =0.6\columnwidth,
            xtick={0.1, 0.5, 0.9},
            ytick={100, 300, 500},
            ytick style={draw=none},
             ymajorgrids = true,
            xmajorgrids = true,
               yticklabel style = {font=\footnotesize},
            xticklabel style = {font=\footnotesize}
            ]
       \addplot[
            scatter/classes={
                1={mark=o,blue},
                2={mark=o,red}
                },
                scatter, only marks,
                scatter src=explicit symbolic,
                nodes near coords*={},]
         table[x=x,y=y,meta=label]
            {numerical/data/cvar_maxsum.dat};
    
        \addplot[
            scatter/classes={
                1={mark=o,blue},
                2={mark=o,red}
                },
                scatter, only marks,
                scatter src=explicit symbolic,
                nodes near coords*={},]
         table[x=x,y=y,meta=label]
            {numerical/data/cvar_alp10.dat};

    \end{axis}
\end{tikzpicture}
     }
     \hfill
     \subfloat[Total-variation distance\label{subfig: tv different del}]{%
       \begin{tikzpicture}
    \begin{axis}[xlabel={{\footnotesize \(\delta\)}},width=0.5\columnwidth,
            height =0.6\columnwidth,
            xtick={0.1, 0.5, 0.9},
            ytick={100, 300, 500},
            ytick style={draw=none},
             ymajorgrids = true,
            xmajorgrids = true,
               yticklabel style = {font=\footnotesize},
            xticklabel style = {font=\footnotesize}
            ]
       \addplot[
            scatter/classes={
                1={mark=o,blue},
                2={mark=o,red}
                },
                scatter, only marks,
                scatter src=explicit symbolic,
                nodes near coords*={},]
         table[x=x,y=y,meta=label]
            {numerical/data/cvar_maxsum.dat};
    
        \addplot[
            scatter/classes={
                1={mark=o,blue},
                2={mark=o,red}
                },
                scatter, only marks,
                scatter src=explicit symbolic,
                nodes near coords*={},]
         table[x=x,y=y,meta=label]
            {numerical/data/cvar_alp10.dat};

    \end{axis}
\end{tikzpicture}
     }
     \caption{The average number of flights per hour assigned to different communities subject to capacity constraints based on different risk measures and different values of parameter \(\delta\). The red and blue markers show the results when maximizing the total demand served and ensuring alpha-fairness, respectively. }
     \label{fig: different delta}
\end{figure}

\section{Conclusion}
We develop a routing algorithm to fairly serve the demands of multiple communities in UAM subject to risk-aware constraints. We ensure the fairness of the demands served for different communities by maximizing the sum of alpha-utility functions, and the robust satisfaction of stochastic link and node capacity constraints via bounding the coherent risk measures of capacity violation. We demonstrate our results using a UAM network based on the city of Austin.

However, our current results still have several limitations. For example, the proposed algorithm does not consider dynamic changes in link and node capacity to time-dependent noise regulations. For future work, we plan to extend our results to dynamic routing as well as simultaneous network design and traffic routing in UAM.

\section*{Appendix: Function Common risk measures}
We provide some common examples of formulas of the function \(g_\delta\) in Assumption~\ref{asp: risk envelope} for some common coherent risk measures, along with its convex conjugate function, which appears in optimization~\eqref{opt: crm routing conic}.
\subsection{Conditional value-at-risk}
For conditional value-at-risk, the function \(g\) in Assumption~\ref{asp: risk envelope} takes the following form: 
\begin{equation}
    g_\delta(q)\coloneqq \norm{\diag(p)^{-1}q}_\infty-1/(1-\delta)
\end{equation}
We can show, for any \(r\in\mathbb{R}^N\), that
\begin{equation}\label{eqn: cvar g*}
\begin{aligned}
    g_\delta^*(r) &= 1/(1-\delta)+\sup_q \, r^\top q -\norm{\diag(p)^{-1}q}_\infty\\
    &= 1/(1-\delta) +\sup_u \, r^\top \diag(p) u-\norm{u}_\infty\\
    &=\begin{cases}
    1/(1-\delta), & \text{if } \norm{\diag(p)r}_1\leq 1,\\
    \infty, & \text{otherwise.}
    \end{cases}
\end{aligned}   
\end{equation}

\subsection{Entropic value-at-risk}
For entropic value-at-risk, the function \(g\) in Assumption~\ref{asp: risk envelope} takes the following form: 
\begin{equation}
    g_\delta (q) \coloneqq \begin{cases} 
    \sum_{i=1}^N q_i\ln(q_i/p_i)+\ln(1-\delta), & \text{if } q\in\Delta_N,\\
    \infty, & \text{otherwise.}
    \end{cases}
\end{equation}
Given \(r\in\mathbb{R}^N\), we have
\begin{equation}\label{eqn: KL conjugate}
  \textstyle   g_\delta^*(r) = -\ln(1-\delta)+\sup_{q\in\Delta_N} \, r^\top q- \sum_{i=1}^N q_i\ln(q_i/p_i)
\end{equation}
Using the KKT conditions, we can show that the supreme in \eqref{eqn: KL conjugate} is attained if there exists \(\beta\in\mathbb{R}\) such that \(0=r_i-\ln(q_i/p_i)-\beta-1\) for all \(i\in[N]\) and \(1=q^\top 1_N,\). Substituting these conditions into \eqref{eqn: KL conjugate} shows that
\begin{equation}\label{eqn: evar g*}
    g_\delta^*(r) = -\ln(1-\delta)+\ln(p^\top \exp(r))
\end{equation}
where \(\exp(r)\) is the elementwise exponential of \(r\). In this case, one can show that \(g_\delta^*(r)\) is the optimal value of an exponential cone program. See \cite[Sec. 4.1.2]{dixit2022risk} for a related discussion. 

\subsection{Total variation distance}
For the total variance distance \cite{dixit2022distributionally}, the function \(g\) in Assumption~\ref{asp: risk envelope} takes the following form: 
\begin{equation}
    g_\delta(q)\coloneqq \norm{q-p}_1-2\delta.
\end{equation}
Given \(r\in\mathbb{R}^N\), we can show that
\begin{equation}\label{eqn: tvd g*}
\begin{aligned}
    g_\delta^*(r) &= 2\delta+r^\top p+\sup_q \, r^\top (q-p) -\norm{q-p}_1\\
    &=\begin{cases}
    2\delta +r^\top p, & \text{if } \norm{r}_\infty\leq 1,\\
    \infty, & \text{otherwise.}
    \end{cases}
\end{aligned}   
\end{equation}

%

\bibliographystyle{IEEEtran}
\bibliography{IEEEabrv,reference}

\end{document}